\documentclass{amsart}
\usepackage{cases}
\usepackage{amsmath}
\usepackage{amssymb}
\usepackage{amsfonts}
\usepackage{graphicx}
\newtheorem{theorem}{Theorem}[section]

\newtheorem{lemma}[theorem]{Lemma}

\newtheorem*{problem*}{Problem 1}
\newtheorem{proposition}[theorem]{Proposition}
\newtheorem{remark}[theorem]{Remark}

\newtheorem{thm}{Theorem}



\begin{document}
\title[Rigidity of closed metric measure spaces]
{Rigidity of closed metric measure spaces\\
with nonnegative curvature}
\author{Jia-Yong Wu}
\address{Department of Mathematics, Shanghai Maritime University,
1550 Haigang Avenue, Shanghai 201306, P. R. China}
 \email{jywu81@yahoo.com}
\thanks{This work was partially supported by the NSFC
(11101267, 11271132) and the Innovation Program of
Shanghai Municipal Education Commission (13YZ087).}
\subjclass[2010]{Primary 53C24; Secondary 53C21, 35P15}
\date{Published in Kodai Math. J. 39 (2016) 489-499.
Originally submitted to another journal on Feb. 16, 2013.}
\keywords{Bakry-\'{E}mery Ricci curvature,
weighted Laplacian, eigenvalue, rigidity}
\begin{abstract}
We show that one-dimensional circle is the
only case for closed smooth metric measure spaces with
nonnegative Bakry-\'{E}mery Ricci curvature whose
spectrum of the weighted Laplacian has an optimal
positive upper bound. This result extends the work of
Hang-Wang in the manifold case (Int. Math.
Res. Not. 18 (2007), Art. ID rnm064, 9pp).
\end{abstract}
\maketitle

\section{Introduction}
Let $(M,g)$ be an $n$-dimensional closed Riemannian manifold and
$f\in C^2(M)$. We define a weighted Laplacian on $M$
\[
\Delta_f:=\Delta-\nabla f\cdot\nabla,
\]
which is a self-adjoint operator with respect to the weighted
measure $e^{-f}dv$ (for short $d\mu$),  where $dv$ is the volume
element induced by the metric $g$. The weighted Laplacian is
very much related to the Laplacian of a suitable conformal change
of the background Riemannian metric. It also naturally arises in
potential theory, probability theory and harmonic analysis on complete
Riemannian manifolds. Here, the triple $(M,g,e^{-f}dv)$ is customarily
called a smooth metric measure space.

On the smooth metric measure space $(M,g,e^{-f}dv)$, Bakry-\'{E}mery \cite{[BE]}
(see also \cite{[LD],[Lott1]}) introduced the Bakry-\'{E}mery Ricci curvature
\[
Ric_f:=Ric+Hess(f),
\]
where $Ric$ denotes the Ricci curvature of the manifold and $Hess$
denotes the Hessian with respect to the Riemannian metric.
A remarkable feather of $Ric_f$ is that this tensor relates
to the weighted Laplacian via the following Bochner formula
\begin{equation}\label{Boch}
\Delta_f|\nabla u|^2=2|Hess(u)|^2
+2\langle\nabla u,\nabla\Delta_f u\rangle
+2Ric_f(\nabla u,\nabla u).
\end{equation}
Moreover, Bakry-\'{E}mery Ricci curvature is
related to the gradient Ricci soliton:
\[
Ric_f=\lambda g,
\]
where $\lambda$ is some real constant. The gradient Ricci
soliton is called expanding, steady and shrinking, accordingly
when $\lambda<0$, $\lambda=0$ and $\lambda>0$. As we
all know, The Ricci soliton plays an important role in the theory
of the Ricci flow \cite{[Cao1]}. It is a special solution of the Ricci
flow and often arises from the blow up analysis of the singularities
of the Ricci flow \cite{[Hamilton]}.

By the variational characterization,
the first nontrivial eigenvalue of the weighted Laplacian
on closed metric measure space $(M,g,e^{-f}dv)$ with
respect to the weighted measure $d\mu$ is defined by
\[
\lambda_1:=\inf\limits_{\phi\neq 0}
\left\{\int_M (\nabla\phi,\nabla \phi)d\mu,
:\int_M |\phi|^2d\mu=1, \int_M \phi d\mu=1, \phi\in C^{\infty}(M) \right\}.
\]
The above infimum can be achieved by some smooth
eigenfunction $\phi$. Meanwhile the eigenfunction $\phi$
satisfies the Euler-Lagrange equation
\[
\Delta_f \phi=-\lambda_1 \phi.
\]
We easily see that if potential function $f$ is constant, then $Ric_f$
recovers the ordinary Ricci curvature and the above formulas
all reduce to the classical case.

Many interesting rigid results involving Bakry-\'{E}mery
Ricci curvature have been studied in large part due to their similar
properties between Bakry-\'{E}mery curvature and Ricci curvature.
We refer the readers to \cite{[BE2]}, \cite{[CSW]}, \cite{[FLZ]}, \cite{[HPW]},
\cite{[JaWy]}, \cite{[Kuw]}, \cite{[LD2]}, \cite{[Lott1]}, \cite{[WW]},
\cite{[Wu1]}, \cite{[Wu2]} and reference therein. In particular,
Munteanu and Wang \cite{[MuWa],[MuWa2]}, Su and Zhang \cite{[SuZh]},
and the author \cite{[Wu3]} proved many interesting splitting
results on complete noncompact metric measure spaces under some
assumptions on Bakry-\'{E}mery Ricci curvature. Recently, various
Liouville-type theorems on smooth metric measure spaces were
obtained, see for example \cite{[PRS]} and \cite{[Wu4]}--\cite{[WuWu3]}.

\vspace{0.5em}

In this paper, we continue to discuss a rigid result on the
closed smooth metric measure space rather than the complete
noncompact case. Before introducing our result, we first
recall some well-known eigenvalue estimates on
closed smooth manifolds with nonnegative Ricci curvature.
As we all know, Li and Yau \cite{[Li-Yau]} applied gradient
estimate technique to give a lower bound of the first
eigenvalue of the Laplace operator on a closed manifold
with nonnegative Ricci curvature:
\[
\lambda_1\geq \frac{\pi^2}{2d^2},
\]
where $d$ is the diameter of the manifold. Later, Zhong and Yang
\cite{[Zh-Ya]} improved this result to
\[
\lambda_1\geq \frac{\pi^2}{d^2}.
\]
Recently, there exist some alternate proofs of this result
in \cite{[AC1],[AC2]} and \cite{[Ni]} . We also see that the
above estimate is optimal as equality holds on $S^1$.
Moreover, Hang and Wang \cite{[HangWang]} proved
that $S^1$ is the \emph{only} case for the case
$\lambda_1={\pi^2}/{d^2}$. Their proof relies on a strong
maximum principle and a careful geometrical analysis, which
is not only to simply analyze the proof course of the
Zhang-Yang's inequality becoming the equality. On the
other hand,  Zhong-Yang's result was extended by Chen and
Wang \cite{[ChWa1],[ChWa2]} via probabilistic approach, and
further generalized by Bakry and Qian \cite{[BE3]} to the
smooth metric measure spaces. In particular, they proved that
\begin{thm}\label{T1}
Let $(M,g,e^{-f}dv)$ be a closed smooth metric measure space
with nonnegative Bakry-\'{E}mery Ricci curvature. Then
\[
\lambda_1\geq \frac{\pi^2}{d^2},
\]
where $d$ is the diameter of the manifold $M$.
\end{thm}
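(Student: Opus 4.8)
The plan is to carry over the gradient-estimate method of Zhong--Yang to the weighted operator $\Delta_f$. The guiding observation is that the sharp constant $\pi^2/d^2$ in their argument never uses the manifold dimension: it requires only the Bochner formula, a curvature lower bound, the Kato inequality, and the maximum principle. Each of these survives verbatim when $\Delta$ and $Ric$ are replaced by $\Delta_f$ and $Ric_f$, because $\Delta_f$ is a self-adjoint second-order elliptic operator obeying the Bochner formula \eqref{Boch}, and because the maximum principle holds for it: at an interior maximum of any smooth $Q$ one has $\nabla Q=0$, so $\Delta_f Q=\Delta Q-\nabla f\cdot\nabla Q=\Delta Q\le 0$, exactly as for the ordinary Laplacian.

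First I would fix a first eigenfunction $u$ with $\Delta_f u=-\lambda_1 u$. Since $\int_M u\,d\mu=0$, $u$ changes sign, so $\min u<0<\max u$. Rescaling by a constant (which preserves the eigenvalue equation) and possibly replacing $u$ by $-u$, I normalize so that $u$ takes values in $[a-1,a+1]$, where $a\in[0,1)$ is an asymmetry parameter, with $a=0$ the symmetric circle model $u=\cos$. The heart of the proof is then a pointwise gradient estimate of Zhong--Yang type,
\[
|\nabla u|^2\le\lambda_1\left[1-(u-a)^2+a\,z(u-a)\right],
\]
where $z$ is an auxiliary correction function, determined by a prescribed ordinary differential equation, that vanishes when $a=0$ and is tailored so that the computation closes up. To prove it, set $Q:=|\nabla u|^2-\lambda_1\left[1-(u-a)^2+a\,z(u-a)\right]$ and compute $\Delta_f Q$ from \eqref{Boch}. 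I would discard the nonnegative curvature term using $Ric_f\ge 0$, substitute $\Delta_f u=-\lambda_1 u$, and—crucially, since $\Delta_f u$ is not the trace of $Hess(u)$, so the dimensional inequality $|Hess(u)|^2\ge(\Delta u)^2/n$ is of no use—bound the Hessian term from below by the Kato inequality $|Hess(u)|^2\ge\bigl|\nabla|\nabla u|\bigr|^2$. Once the ODE defining $z$ is imposed, this yields a sign-definite differential inequality $\Delta_f Q\ge(\text{lower-order terms in }Q)$, and the maximum principle for $\Delta_f$ forces $Q\le 0$, which is the desired estimate.

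With the gradient estimate in hand, I would integrate along a minimizing geodesic $\gamma$ of length $L(\gamma)$ joining a minimum point of $u$ to a maximum point. Writing $F(t)=1-(t-a)^2+a\,z(t-a)$ and using $|\nabla u|\le\sqrt{\lambda_1 F(u)}$ together with the fundamental theorem of calculus along $\gamma$, one gets
\[
\sqrt{\lambda_1}\,d\ge\sqrt{\lambda_1}\,L(\gamma)\ge\int_{a-1}^{a+1}\frac{dt}{\sqrt{1-(t-a)^2+a\,z(t-a)}}.
\]
The defining property of $z$ is precisely that this one-dimensional integral equals $\pi$ for every $a\in[0,1)$ (for $a=0$ it is the elementary $\int_{-1}^{1}(1-t^2)^{-1/2}\,dt=\pi$), so $\sqrt{\lambda_1}\,d\ge\pi$, i.e. $\lambda_1\ge\pi^2/d^2$.

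The main obstacle is the gradient estimate, and within it the construction and analysis of the correction function $z$: one must exhibit a single ODE whose solution simultaneously (i) makes the Bochner computation produce a sign-definite differential inequality for $Q$ after only the Kato inequality is invoked, and (ii) normalizes the integral above to $\pi$ independently of $a$. The delicate points are verifying that the weight $\nabla f$ contributes nothing beyond what $Ric_f\ge 0$ already absorbs, and handling the regularity of $Q$ at the extrema of $u$, where $|\nabla u|$ may vanish; everything else transfers mechanically from the unweighted Zhong--Yang argument.
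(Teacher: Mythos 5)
Your proposal follows essentially the same route as the paper: a weighted Zhong--Yang gradient estimate of the form $|\nabla u|^{2}\leq\lambda_1 (1-u^2)+2a\lambda_1 z(u)$, proved via the Bochner formula \eqref{Boch}, $Ric_f\geq 0$, a Kato-type Hessian bound and the maximum principle for $\Delta_f$ (exactly because, as you note, the sharp argument never uses the dimensional term $(\Delta u)^2/n$), followed by integration of the estimate along a minimizing geodesic joining the extrema of $u$. The only cosmetic discrepancy is that the paper works with the explicit correction $z(u)=\frac{2}{\pi}\left(\arcsin u+u\sqrt{1-u^2}\right)-u$ and, using oddness and a convexity inequality, shows the resulting integral is at least $\pi+\frac{3a^2}{\pi^2}\left(\frac{\pi}{2}-1\right)^4$ --- strictly larger than $\pi$ unless $a=0$, which is what later drives the rigidity --- rather than being normalized to equal $\pi$ exactly for every $a$ as you propose.
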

We remark that Theorem \ref{T1} has been generalized by
B. Andrews and L. Ni \cite{[AN]}, and A. Futaki and Y. Sano
\cite{[FuSa]}, and further improved by A. Futaki, H.-Z. Li
and X.-D. Li \cite{[FLL]} based on the arguments of Chen and
Wang \cite{[ChWa1],[ChWa2]}.

Motivated by the Hang-Wang's result \cite{[HangWang]}, we
may ask if there exists a Hang-Wang type rigid result
in closed smooth metric measure spaces. That is to say whether
or not $S^1$ is the \emph{only} example for the case
$\lambda_1={\pi^2}/{d^2}$ in Theorem \ref{T1}? The
purpose of this short note is to give an affirmative answer.
Our main result is
\begin{theorem}\label{main}
Let $(M,g,e^{-f}dv)$ be a closed smooth metric measure space
with nonnegative Bakry-\'{E}mery Ricci curvature. Assume that
the first nontrivial eigenvalue of the weighted Laplacian
satisfies
\[
\lambda_1=\frac{\pi^2}{d^2},
\]
where $d$ is the diameter of the manifold $M$. Then $M$ is
isometric to the circle of radius $\frac d\pi$ and $f$ is
constant.
\end{theorem}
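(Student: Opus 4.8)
The plan is to follow the strategy of Hang--Wang, adapting each step to the weighted setting by systematically replacing the Laplacian, the Bochner formula and the Ricci curvature with their drifted counterparts $\Delta_f$, \eqref{Boch} and $Ric_f$. Let $u$ be a first eigenfunction, $\Delta_f u=-\lambda_1 u$, after the standard reduction to the balanced normalization $\max_M u=-\min_M u=1$; the sharp gradient estimate underlying Theorem \ref{T1} then provides
\[
W:=\lambda_1(1-u^2)-|\nabla u|^2\ge 0 \quad\text{on } M.
\]
First I would extract an extremal geodesic. Since $|\nabla(\arccos u)|=|\nabla u|/\sqrt{1-u^2}\le\sqrt{\lambda_1}=\pi/d$, the function $\arccos u$ is $(\pi/d)$-Lipschitz and attains every value in $[0,\pi]$, so for $p\in\{u=1\}$ and $q\in\{u=-1\}$ we get $\pi\le(\pi/d)\,\mathrm{dist}(p,q)\le\pi$. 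Hence $\mathrm{dist}(p,q)=d$, and along a minimizing geodesic $\gamma$ from $p$ to $q$ every inequality is saturated: $W\equiv0$ on $\gamma$, the curve $\gamma$ is an integral curve of $\nabla u/|\nabla u|$, and $u(\gamma(s))=\cos(\pi s/d)$.

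The analytic heart is to propagate this equality off $\gamma$ by a strong maximum principle. Using \eqref{Boch} with $\Delta_f u=-\lambda_1 u$ one computes, on the regular set $\{\nabla u\neq0\}$,
\[
\Delta_f W=2\lambda_1^2u^2-2|Hess(u)|^2-2Ric_f(\nabla u,\nabla u).
\]
Discarding the curvature term by $Ric_f\ge0$ and writing $\hat\nu=\nabla u/|\nabla u|$, the identity $Hess(u)(\nabla u,\cdot)=\tfrac12\nabla|\nabla u|^2=-\tfrac12\nabla W-\lambda_1u\,\nabla u$ gives $|Hess(u)|^2\ge Hess(u)(\hat\nu,\hat\nu)^2\ge\lambda_1^2u^2+\lambda_1u\,|\nabla u|^{-2}\langle\nabla W,\nabla u\rangle$, so that
\[
\Delta_f W\le\Big\langle -\tfrac{2\lambda_1 u}{|\nabla u|^2}\,\nabla u,\ \nabla W\Big\rangle .
\]
Since $W\ge0$ attains the interior minimum value $0$ along the regular interior of $\gamma$, the strong minimum principle forces $W\equiv0$ on the component of $\{\nabla u\neq0\}$ containing $\gamma$, hence on $M$ by continuity. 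Once $W\equiv0$, differentiating $|\nabla u|^2=\lambda_1(1-u^2)$ yields $Hess(u)(\nabla u,\cdot)=-\lambda_1u\,\nabla u$, and feeding this back into the now-equality $\Delta_f W=0$ forces the \emph{rank-one} structure
\[
Hess(u)=-\lambda_1u\,\hat\nu\otimes\hat\nu,\qquad Ric_f(\nabla u,\nabla u)=0 .
\]
In particular $\Delta u=\mathrm{tr}\,Hess(u)=-\lambda_1u$, so $\langle\nabla f,\nabla u\rangle=\Delta u-\Delta_f u=0$, and the critical set of $u$ equals $\{u=1\}\cup\{u=-1\}=:A\cup B$, since $|\nabla u|^2$ vanishes precisely where $u=\pm1$.

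The final, and I expect hardest, step is the geometric reduction to dimension one, the analogue of Hang--Wang's ``careful geometrical analysis.'' Suppose $\dim M=n\ge2$. By continuity of $Hess(u)$ the rank-one structure persists at critical points, so at each $p\in A$ the Hessian has eigenvalues $\{-\lambda_1,0,\dots,0\}$; thus $A$ and $B$ are totally geodesic \emph{hypersurfaces} (Morse--Bott critical submanifolds), and a short linearization of the gradient flow shows that exactly two integral curves of $\nabla u/|\nabla u|$ emanate from each critical point, along the single non-degenerate normal direction. Each such curve is a geodesic on which $u=\cos(\pi s/d)$, joining $A$ to $B$ in length exactly $d$, and the Lipschitz bound shows these are the only length-$d$, saturating $A$--$B$ geodesics. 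Choosing $a_1\neq a_2$ in the positive-dimensional set $A$ whose forward curves hit distinct points $b_1\neq b_2\in B$ (possible since the endpoint map cannot be constant into a rank-one critical point), the Lipschitz estimate gives $\mathrm{dist}(a_1,b_2)\ge d$, while equality would require a saturating geodesic from $a_1$ to $b_2$ and no such curve from $a_1$ reaches $b_2$; hence $\mathrm{dist}(a_1,b_2)>d$, contradicting $\mathrm{diam}\,M=d$. Therefore $n=1$, so $M$ is a closed one-manifold of diameter $d$, namely the circle of radius $d/\pi$; and since $\nabla f\perp\nabla u$ with $\nabla u\neq0$ away from two points, $f$ is constant.
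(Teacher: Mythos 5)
The first half of your proposal is correct and is essentially the paper's argument in disguise: after scaling, your $W=\lambda_1(1-u^2)-|\nabla u|^2$ equals $\lambda_1-\psi$ with $\psi=|\nabla u|^2+\lambda_1 u^2$, and your drift inequality $\Delta_f W\le\bigl\langle -2\lambda_1 u|\nabla u|^{-2}\nabla u,\nabla W\bigr\rangle$ on $\{\nabla u\neq0\}$ is exactly Lemma~\ref{Lem2}; the saturation along minimizing geodesics, the bound $\psi\le 1$, and the rank-one Hessian at extremal points all match the paper. Two caveats already here. First, the ``balanced normalization'' $\max u=-\min u=1$ is not a normalization you are free to impose: you may scale an eigenfunction but not add a constant to it. It is \emph{forced} by the hypothesis, because the refined estimate in Section~\ref{theor1} gives $\lambda_1>\pi^2/d^2$ unless the median $a=0$; you should say this. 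Second, ``$W\equiv0$ on the component of $\{\nabla u\neq0\}$ containing $\gamma$, hence on $M$ by continuity'' is a genuine gap: continuity only gives vanishing on the \emph{closure} of that one component, and critical points of $u$ could a priori separate it from the rest of $M$. The paper closes this by first proving $\{u=\pm1\}$ is finite (at most four points), so that for $\dim M\ge2$ the set $M^*=M\setminus\{u=\pm1\}$ is connected, and then running an open--closed argument for $E=\{\psi=1\}\subset M^*$ (noting $E\subset\{\nabla u\neq0\}$ automatically, since $|\nabla u|^2=\lambda_1(1-u^2)>0$ on $E$).

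The fatal flaw is your final step. From the rank-one Hessian at points of $A=\{u=1\}$ you conclude that $A$ and $B$ are totally geodesic \emph{hypersurfaces} (``Morse--Bott critical submanifolds''). That is a non sequitur: a rank-one Hessian at a degenerate critical point gives no lower bound whatsoever on the dimension of the critical set, and nondegeneracy on the normal bundle is a \emph{hypothesis} of Morse--Bott theory, not a consequence of rank one (think of $1-x_1^2-x_2^4$, whose maximum is an isolated point with rank-one Hessian). In fact the truth is the opposite, and it is precisely the paper's Proposition~3.1: any minimizing geodesic from the fixed minimum $p_0$ to a point $p$ with $u(p)=1$ saturates the gradient bound, forcing its initial vector into the one-dimensional rank direction of $(\nabla^2u)_{p_0}$, so $p=\exp_{p_0}(\pi\gamma_p'(0))$ has at most two choices; hence $\{u=\pm1\}$ has at most four points. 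Your contradiction (``choose $a_1\neq a_2$ in the positive-dimensional set $A$\dots'') therefore rests on a false premise and collapses; the auxiliary claims along the way (exactly two curves emanate by ``linearization'' at a degenerate critical point; the endpoint map is non-constant; the second curve from $a_1$ avoids $b_2$) are asserted rather than proved. What the argument actually needs, and what the paper supplies, is a contradiction through the genuine hypersurface $\Sigma=\{u=0\}$ (a hypersurface because $|\nabla u|=\sqrt{\lambda_1}$ there): once $\psi\equiv1$ and $\nabla^2u=-u\,\lambda_X\otimes\lambda_X$ on $M^*$, the integral curves of $-X=-\nabla u/|\nabla u|$ are unit-speed geodesics along which $u(\alpha_p(t))=-\sin t$, so the infinitely many points of $\Sigma$ all reach, at time $\pi/2$, one of at most two minimum points, at which the rank-one Hessian permits at most two incoming directions --- the desired contradiction, yielding $\dim M=1$. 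That finiteness proposition and the flow-from-$\Sigma$ argument are the missing ideas in your proposal.
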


The main arguments to prove Theorem 1.1 comes from Hang-Wang
\cite{[HangWang]}, where the gradient estimate, the maximum principle and
some analysis trick are explored. In our case,  the proof not only depends
on Hang-Wang's arguments \cite{[HangWang]}, but also relies on
the weighted gradient estimate and the weighted Bochner formula.
If $f$ is constant, then Theorem \ref{main} returns to
Hang-Wang's result.

\begin{remark}
Recently, S. Lakzian \cite{[Lak]} extended Hang-Wang's rigidity result
to a general setting of metric measure spaces satisfying $RCD(0,N)$
curvature-dimension conditions. If manifold $M$ is complete noncompact,
Munteanu and Wang \cite{[MuWa]} established a sharp upper
bound of the first nonzero eigenvalue of the weighted Laplacian in
terms of the linear growth rate of $f$. They also proved that if
equality holds on the eigenvalue upper estimate and $M$ is not
connected at infinity, then $M$ must be a cylinder.
\end{remark}

By modifying the proof of Theorem \ref{main}, we also
have a similar result for the first nonzero eigenvalue of
the weighted Laplacian with respect to the Neumann boundary
condition of a smooth metric measure space.
\begin{theorem}\label{main2}
Let $(M,g,e^{-f}dv)$ be a compact smooth metric measure space
with nonnegative Bakry-\'{E}mery Ricci curvature and nonempty
convex boundary. Then the first nontrivial eigenvalue of the
weighted Laplacian with respect to the Neumann boundary
condition satisfies
\[
\mu_1\geq\frac{\pi^2}{d^2},
\]
where $d$ is the diameter of the manifold $M$. Moreover if
the above inequality becomes equality, then $M$ is
isometric to a line segment and $f$ is constant.
\end{theorem}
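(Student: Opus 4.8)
The plan is to mirror the proof of Theorem~\ref{main}, with the convexity of $\partial M$ furnishing exactly the boundary control that having no boundary provided in the closed case. Let $\phi$ be a first eigenfunction, $\Delta_f\phi=-\mu_1\phi$, now subject to the Neumann condition $\partial_\nu\phi=0$ on $\partial M$, normalized by scaling so that $\max_M\phi=1$; write $-m=\min_M\phi$ with $0<m\le1$.

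First I would establish the sharp weighted gradient estimate
\[
|\nabla\phi|^2\le\frac{\pi^2}{d^2}\left(1-\phi^2\right)
\]
(in its refined Zhong--Yang form) by applying the maximum principle to an auxiliary function $P$ built from $|\nabla\phi|^2$ and a function of $\phi$. The interior analysis is the weighted Bochner identity \eqref{Boch}: from $\langle\nabla\phi,\nabla\Delta_f\phi\rangle=-\mu_1|\nabla\phi|^2$ and $Ric_f\ge0$ one gets $\Delta_f|\nabla\phi|^2\ge2|Hess(\phi)|^2-2\mu_1|\nabla\phi|^2$, which makes $P$ a subsolution. The new ingredient is the boundary term: if $P$ attains its maximum at $p\in\partial M$, the Hopf lemma would force $\partial_\nu P(p)>0$ unless $P$ is constant, whereas the Neumann condition makes $\nabla\phi$ tangent to $\partial M$ and gives $\partial_\nu|\nabla\phi|^2=2\,Hess(\phi)(\nabla\phi,\nu)=-2\,\mathrm{II}(\nabla\phi,\nabla\phi)$; convexity $\mathrm{II}\ge0$ then yields $\partial_\nu P(p)\le0$, ruling out such a boundary maximum. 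As in the Zhong--Yang and Bakry--Qian arguments this delivers both the estimate and the bound $\mu_1\ge\pi^2/d^2$.

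Assume now equality $\mu_1=\pi^2/d^2$, so that the sharp estimate holds as the identity $|\nabla\phi|^2=\frac{\pi^2}{d^2}(1-\phi^2)$. Introduce $u$ by $\phi=\cos(\pi u/d)$; substitution turns the identity into $|\nabla u|\equiv1$, and inserting $\phi=\cos(\pi u/d)$ with $|\nabla u|=1$ into $\Delta_f\phi=-\mu_1\phi$ cancels the $\cos$-terms and forces $\Delta_f u=0$ off the critical set $\{\nabla\phi=0\}$. Applying \eqref{Boch} to $u$, where $\Delta_f|\nabla u|^2=0$ and $\nabla\Delta_f u=0$, leaves
\[
0=2|Hess(u)|^2+2\,Ric_f(\nabla u,\nabla u),
\]
so $Ric_f\ge0$ forces $Hess(u)\equiv0$ together with $Ric_f(\nabla u,\nabla u)=0$.

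Finally, $Hess(u)=0$ with $|\nabla u|=1$ produces an isometric splitting $M\cong[0,L]\times N$ in which $u$ is the arclength coordinate on the segment factor and the level sets of $u$ are totally geodesic, with $L=\frac d\pi\arccos(-m)\le d$. At the terminal level set $\{u=L\}$, where $\phi$ attains its minimum, one has $\nabla\phi=0$, hence $\sin(\pi L/d)=0$; since $0<L\le d$ this forces $L=d$ and $m=1$. Moreover $\Delta u=\mathrm{tr}\,Hess(u)=0$ combined with $\Delta_f u=\Delta u-\langle\nabla f,\nabla u\rangle=0$ gives $\langle\nabla f,\nabla u\rangle=0$, so $f$ does not depend on the segment coordinate. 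The metric being a Riemannian product, $\mathrm{diam}(M)=\sqrt{d^2+\mathrm{diam}(N)^2}=d$ forces $\mathrm{diam}(N)=0$, i.e.\ $N$ is a single point. Thus $M$ is isometric to the segment $[0,d]$ and $f$ is constant. I expect the principal obstacle to be the boundary step of the maximum principle, and, relatedly, the rigorous justification of the change of variables and of the splitting across the critical set $\{\nabla\phi=0\}$ --- precisely the places where the convexity and Neumann hypotheses must be deployed with care.
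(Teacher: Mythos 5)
Your first half is exactly the modification the paper intends for Theorem \ref{main2} (the paper itself only says ``by modifying the proof of Theorem \ref{main}'' and gives no details): the Neumann condition makes $\nabla\phi$ tangent to $\partial M$, so $\partial_\nu|\nabla\phi|^2=2\,Hess(\phi)(\nabla\phi,\nu)=-2\,\mathrm{II}(\nabla\phi,\nabla\phi)\le 0$ by convexity, and the Hopf lemma then excludes boundary maxima of the auxiliary function, after which the interior analysis via the weighted Bochner formula \eqref{Boch} runs as in the closed case and yields $\mu_1\ge\pi^2/d^2$. Your endgame is also genuinely different from the paper's, and legitimately so: where the proof of Theorem \ref{main} shows $\{u=\pm1\}$ is finite and, assuming $\dim M\ge 2$, derives a contradiction by funnelling infinitely many integral curves of $\nabla u/|\nabla u|$ into one minimum point, you reparametrize $\phi=\cos(\pi u/d)$, extract $Hess(u)=0$ from \eqref{Boch} (your computations giving $\Delta_f u=0$, $Hess(u)=0$ and $\langle\nabla f,\nabla u\rangle=0$ on the regular set are correct), split off a segment factor, and kill the cross factor by the diameter of the product. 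That is an attractive, arguably cleaner route to the same one-dimensionality.

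The genuine gap is at your pivot sentence: ``Assume now equality $\mu_1=\pi^2/d^2$, \emph{so that} the sharp estimate holds as the identity $|\nabla\phi|^2=\frac{\pi^2}{d^2}(1-\phi^2)$.'' Equality of the eigenvalue does not upgrade the gradient inequality to a pointwise identity by fiat; establishing that identity is precisely what the entire equality analysis in the paper does, and none of that machinery appears in your outline. The needed chain is: (i) equality first forces the median $a=0$, i.e.\ $\min\phi=-\max\phi$, by strictness of the refined Zhong--Yang integral chain --- note that your normalization $0<m\le 1$ with $m=1$ deduced \emph{from} the identity is circular, since the identity without the correction term $2a\mu_1 z$ presupposes $a=0$; (ii) the differential inequality of Lemma \ref{Lem2} for $\psi=|\nabla u|^2+\lambda u^2$ on $\Omega=\{\nabla u\ne0\}$ plus the maximum principle (with your Hopf/convexity step invoked again) give $\psi\le 1$; (iii) nonemptiness of $\{\psi=1\}$ comes from tracing equality along a unit-speed minimizing curve from minimum to maximum, where $u(\gamma(t))=-\cos t$; and (iv) the strong maximum principle propagates $\psi\equiv1$ over $M^*=M\setminus\{u=\pm1\}$, whose connectedness rests on the finiteness of $\{u=\pm1\}$ --- so even your splitting route cannot entirely dispense with the counting lemma (at minimum you must work on the component of $M^*$ containing $\gamma$ and then argue its closure is all of $M$). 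In the boundary setting there is the further wrinkle, untouched in your outline, that minimizing curves in steps (iii) and in the finiteness argument may touch $\partial M$, where they need not be determined by initial data; convexity must be used to handle this. Finally, a small slip: your displayed ``sharp weighted gradient estimate'' should carry $\mu_1$ (and the Zhong--Yang term $2a\mu_1 z$) on the right, as in Proposition \ref{L301}; writing $\pi^2/d^2$ there assumes the conclusion.
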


The rest of this paper is organized as follows. In Section
\ref{theor1}, we first recall the proof of Theorem \ref{T1}
and then give an important lemma (see Lemma \ref{Lem2}). In
Section \ref{theor3}, we apply Lemma \ref{Lem2} and the
strong maximum principle to prove Theorem \ref{main}.

\textbf{Acknowledgement}.
The author would like to thank the referee for pointing out mini errors and
making valuable suggestions for the earlier version of this paper.

\section{A key lemma} \label{theor1}
In this section, we first recall the proof of Theorem \ref{T1},
which nearly follows the proofs of Li-Yau \cite{[Li-Yau]} and
Zhong-Yang \cite{[Zh-Ya]}. Here we sketch
the proof for the reader's convenience. Let $(M,g,e^{-f}dv)$ be
a closed smooth metric measure space. Let $\phi$ be the first
eigenfunction of the weighted Laplacian. By multiplying with
a constant it is possible to arrange that
\[
a-1=\min_M \phi,\quad a+1=\max_M \phi,
\]
where $0\leq a(\phi)<1$ is the median of $\phi$. Letting
$u=\phi-a$, then
\[
\Delta_fu=-\lambda_1(u+a).
\]
Following the arguments of \cite{[Li-Yau]} and \cite{[Zh-Ya]}, we can
establish the following gradient estimate of the function $u$.
\begin{proposition}\label{L301}
Let $(M,g,e^{-f}dv)$ be a closed smooth metric measure space
with nonnegative Bakry-\'{E}mery Ricci curvature. Then
\begin{equation}\label{grad}
|\nabla u|^{2}\leq\lambda_1 (1-u^2)+2a\lambda_1
z(u),
\end{equation}
where $u=\phi-a$ and
\[
z(u)=\frac{2}{\pi}\left(\arcsin
u+u\sqrt{1-u^2}\right)-u,\quad u\in [-1,1].
\]
\end{proposition}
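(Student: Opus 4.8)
The plan is to reproduce the Zhong--Yang maximum-principle argument, read through the weighted Bochner formula \eqref{Boch}. Write $\psi(u):=(1-u^2)+2a\,z(u)$, so that the claim becomes $|\nabla u|^2\le\lambda_1\psi(u)$ on $M$. The first thing I would record is the ODE that $z$ satisfies: differentiating the definition gives $z'(u)=\frac4\pi\sqrt{1-u^2}-1$ and $z''(u)=-\frac{4u}{\pi\sqrt{1-u^2}}$, together with $z(\pm1)=0$, and one checks directly that $\psi\ge0$ on $[-1,1]$ for $0\le a<1$. These relations are exactly what will make the final inequality close, so I would isolate them at the outset. Then set $F:=|\nabla u|^2-\lambda_1\psi(u)$ and aim to prove $F\le0$.

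Since $M$ is closed, $F$ attains a maximum at some $x_0$, and I would argue by contradiction, assuming $F(x_0)>0$. If $\nabla u(x_0)=0$ then $F(x_0)=-\lambda_1\psi(u(x_0))\le0$, contradicting positivity, so necessarily $\nabla u(x_0)\ne0$. At $x_0$ one has $\nabla F=0$ and $\Delta_f F\le0$ (at a critical point of $F$ the drift term drops out, so $\Delta_f F=\Delta F\le0$). Expanding $\Delta_f F$ by \eqref{Boch}, the curvature term $2Ric_f(\nabla u,\nabla u)$ is nonnegative and is discarded, while the eigenvalue equation $\Delta_f u=-\lambda_1(u+a)$ turns the cross term into $\langle\nabla u,\nabla\Delta_f u\rangle=-\lambda_1|\nabla u|^2$ and turns $\Delta_f[\psi(u)]$ into $-\lambda_1(u+a)\psi'(u)+\psi''(u)|\nabla u|^2$. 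The drift $\nabla f$ enters only through $\Delta_f$ and is thus absorbed completely by the eigenvalue equation; this is the single point where the weighted case differs from Zhong--Yang, and it costs nothing.

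The technical core is the lower bound for $|Hess(u)|^2$. Differentiating $|\nabla u|^2$ shows $\nabla|\nabla u|^2=2\,Hess(u)(\nabla u,\cdot)^\sharp$, so the critical-point relation $\nabla|\nabla u|^2=\lambda_1\psi'(u)\nabla u$ forces every mixed second derivative in the direction $e_1:=\nabla u/|\nabla u|$ to vanish and pins down $Hess(u)(e_1,e_1)=\tfrac12\lambda_1\psi'(u)$. Feeding the resulting sharp, essentially dimension-free bound $|Hess(u)|^2\ge\tfrac14\lambda_1^2\psi'(u)^2$ into $\Delta_f F(x_0)\le0$ and substituting the identity $2+\psi''=2a\,z''$ reduces everything to an inequality in the single variable $u(x_0)$; invoking the ODE for $z$ then makes the terms line up with the correct sign and yields the contradiction. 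I expect this sign-chasing --- verifying that the particular barrier $z$ is precisely the one that closes the inequality --- to be the main obstacle, as it is in the original Zhong--Yang proof.

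One genuine technical point remains: $z''$ blows up like $(1-u^2)^{-1/2}$ as $u\to\pm1$, so the computation degenerates near the extrema of $u$. Since the offending quantity appears as $z''|\nabla u|^2$ and $|\nabla u|^2$ is itself small there, the product is harmless once the estimate is known, but to avoid circularity I would run the argument for a slightly enlarged barrier $\psi_\varepsilon$ depending on a parameter $\varepsilon>0$ (chosen so that the perturbed maximum is pushed away from $\{u=\pm1\}$), and then let $\varepsilon\to0$ to recover \eqref{grad}.
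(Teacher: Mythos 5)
Your weighted bookkeeping is correct and is exactly what the paper intends (the paper gives no detailed proof of Proposition \ref{L301}, deferring to Li--Yau and Zhong--Yang): the drift is absorbed because $\Delta_f F(x_0)=\Delta F(x_0)\le 0$ at a critical maximum, the Bochner formula \eqref{Boch} with $Ric_f\ge 0$ replaces the classical one at no cost, and your Hessian pinning $|Hess(u)|^2\ge\tfrac14\lambda_1^2\psi'(u)^2$ and the identity $2+\psi''=2az''$ are right. The gap is in the step you deferred as ``sign-chasing.'' Carry your computation to the end at the maximum point $x_0$ of $F=|\nabla u|^2-\lambda_1\psi(u)$, with $F(x_0)=c>0$ and $\nabla u(x_0)\ne 0$: using $z'+1=\frac4\pi\sqrt{1-u^2}$ and the ODE $(1-u^2)z''+uz'+u=0$, all the terms you list cancel down to
\begin{equation*}
0\;\ge\;2a^2\lambda_1^2\left[z'(z'+1)-2z''z\right]\Big|_{u(x_0)}\;-\;2a\lambda_1\,z''\bigl(u(x_0)\bigr)\,c .
\end{equation*}
One checks (substituting $u=\sin\theta$) that the bracket is nonnegative on $[-1,1]$ and vanishes at $u=\pm1$, so the only strictly positive term available for a contradiction is $-2a\lambda_1 z''c$. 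But $z''(u)=-\frac{4u}{\pi\sqrt{1-u^2}}$ is negative only where $u>0$. If the maximum of $F$ occurs where $u(x_0)\le 0$, your inequality reads $0\ge(\text{nonnegative})-(\text{nonnegative})$ and yields no contradiction; quantitatively it is even worse near $u=-1$, where the bracket is $O(1-u^2)$ while $z''\sim 4/(\pi\sqrt{1-u^2})$, so a small positive maximum of $F$ located near the minimum of $u$ is entirely consistent with your relation. Your $\varepsilon$-perturbation at the end addresses a different and more minor issue --- the blow-up of $z''$ at $u=\pm1$ --- and does not repair this sign failure.

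This is precisely why the actual Zhong--Yang proof (the one the paper invokes) is not a one-shot maximum principle with the barrier $\psi=1-u^2+2az$. Their argument first establishes the cruder Li--Yau bound $|\nabla u|^2\le\lambda_1\left[(1+a)^2-(u+a)^2\right]$ (which corresponds to the barrier $1-u$ in place of $z$, and which your scheme does prove, via the $(\lambda_1+\varepsilon)$-perturbation, since in that case the bad term is absent), and then improves it to \eqref{grad} by an additional mechanism --- a deformation through a family of barriers together with a case analysis on the location of the maximum --- so that the region $u\le 0$, where $z''\ge 0$, is controlled by the previously established estimate rather than by the sign of $z''$. Your proposal is missing this entire second stage; as written, the contradiction you claim at the maximum point simply does not materialize when $u(x_0)\le 0$.
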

It is clear that $z(u)$  is continuous on $[-1,1]$ and $z(-u)=-z(u)$.
From Proposition \ref{L301}, we can deduce $\lambda_1\geq{\pi^2}/{d^2}$ as
follows. Let $x_1,x_2 \in M$, such that $u(x_1)$ is the maximizing
point and $u(x_2)$ is the minimizing point. Take a minimal geodesic
$\gamma$ from $x_2$ to $x_1$ with length at most $d$. Integrating
the estimate \eqref{grad} along this segment with respect
to arclength and using oddness,
\begin{equation*}
\begin{aligned}
d\lambda^{1/2}_1\geq\lambda^{1/2}_1\int_\gamma ds
&\geq\int_\gamma\frac{|\nabla u|ds}{\sqrt{1-u^2+2az(u)}}\\
&\geq\int^1_0\left(\frac{1}{\sqrt{1-u^2+2az}}+\frac{1}{\sqrt{1-u^2-2az}}\right)du\\
&\geq\int^1_0\frac{1}{\sqrt{1-u^2}}\left(2+\frac{3a^2z^2}{1-u^2}\right)du\\
&\geq\pi+3a^2\left(\int^1_0\frac{zdu}{\sqrt{1-u^2}}\right)^2\\
&=\pi+\frac{3a^2}{\pi^2}\left(\frac{\pi}{2}-1\right)^4.
\end{aligned}
\end{equation*}
Hence $\lambda_1\geq{\pi^2}/{d^2}$
and the inequality is strict unless $a=0$ (i.e. $\min_M u=-1$).

From the above proof, we easily see that on $S^1$, the above inequalities all
become equality. Naturally, we ask if $S^1$ is the only case for the
equality case. The answer is YES! In the rest of this note, we will explain this fact.

At first, we derive a differential inequality on the dense open set which
consists of all regular points of the eigenfunction.
\begin{lemma}\label{Lem2}
Let $(M^n,g,e^{-f}dv)$ be a closed smooth metric measure space. Let
$u$ be a nonzero smooth function on this measure space such that
\[
\Delta_fu=-\lambda u.
\]
Then on $\Omega=\{\nabla u\neq 0\}$,
\begin{equation}\label{inequ}
\Delta_f\psi-\frac{\nabla \psi\cdot\nabla(\psi-2\lambda u^2)}{2|\nabla u|^2}
\geq2 Ric_f(\nabla u,\nabla u),
\end{equation}
where $\psi:=|\nabla u|^2+\lambda u^2$.
\end{lemma}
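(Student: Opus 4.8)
The plan is to prove \eqref{inequ} by a direct computation anchored on the weighted Bochner formula \eqref{Boch}, arranging the algebra so that all the zeroth-order terms cancel and the inequality reduces to a single Cauchy--Schwarz estimate. First I would apply \eqref{Boch} to $u$ and insert the eigenvalue equation $\Delta_f u=-\lambda u$, which forces $\langle\nabla u,\nabla\Delta_f u\rangle=-\lambda|\nabla u|^2$, so that
\[
\Delta_f|\nabla u|^2=2|Hess(u)|^2-2\lambda|\nabla u|^2+2Ric_f(\nabla u,\nabla u).
\]
Next, the weighted product rule gives $\Delta_f(u^2)=2u\,\Delta_f u+2|\nabla u|^2=-2\lambda u^2+2|\nabla u|^2$, hence $\Delta_f(\lambda u^2)=-2\lambda^2u^2+2\lambda|\nabla u|^2$. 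Adding the two contributions, the $\pm2\lambda|\nabla u|^2$ terms cancel and I obtain the clean expression
\[
\Delta_f\psi=2|Hess(u)|^2-2\lambda^2u^2+2Ric_f(\nabla u,\nabla u).
\]

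The second step handles the first-order correction term. The key algebraic observation is that $\psi-2\lambda u^2=|\nabla u|^2-\lambda u^2$, while $\nabla\psi=\nabla|\nabla u|^2+2\lambda u\nabla u$ and $\nabla(\psi-2\lambda u^2)=\nabla|\nabla u|^2-2\lambda u\nabla u$. These two gradients are the sum and difference of the same pair of vectors, so their inner product telescopes in difference-of-squares form with $A=\nabla|\nabla u|^2$ and $B=2\lambda u\nabla u$:
\[
\nabla\psi\cdot\nabla(\psi-2\lambda u^2)=\big|\nabla|\nabla u|^2\big|^2-4\lambda^2u^2|\nabla u|^2.
\]
Dividing by $2|\nabla u|^2$ (legitimate on $\Omega$, where $\nabla u\neq0$) produces a $+2\lambda^2u^2$ term that exactly cancels the $-2\lambda^2u^2$ from $\Delta_f\psi$. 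Thus the full left-hand side of \eqref{inequ} collapses to
\[
\Delta_f\psi-\frac{\nabla\psi\cdot\nabla(\psi-2\lambda u^2)}{2|\nabla u|^2}=2|Hess(u)|^2-\frac{\big|\nabla|\nabla u|^2\big|^2}{2|\nabla u|^2}+2Ric_f(\nabla u,\nabla u).
\]

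It then remains to establish the pointwise inequality $2|Hess(u)|^2\geq\big|\nabla|\nabla u|^2\big|^2/(2|\nabla u|^2)$ on $\Omega$. In a local orthonormal frame one has $\nabla|\nabla u|^2=2\,Hess(u)(\nabla u,\cdot)$, so $\big|\nabla|\nabla u|^2\big|^2=4\sum_i(\sum_j u_{ij}u_j)^2$, and the componentwise Cauchy--Schwarz inequality $\sum_i(\sum_j u_{ij}u_j)^2\leq|Hess(u)|^2|\nabla u|^2$ gives precisely the required bound; the term $2Ric_f(\nabla u,\nabla u)$ survives on the right, yielding \eqref{inequ}. I expect the only delicate points to be the careful bookkeeping of the cancellations in the first two steps and recognizing the difference-of-squares structure of the correction term, which is what makes the zeroth-order contributions disappear; once $\nabla|\nabla u|^2$ is identified with $2\,Hess(u)(\nabla u)$, the closing inequality is a routine Cauchy--Schwarz estimate valid wherever $\nabla u\neq0$.
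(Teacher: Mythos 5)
Your proof is correct and is essentially the paper's own argument: both rest on the weighted Bochner formula \eqref{Boch} combined with $\Delta_f u=-\lambda u$, and on the Cauchy--Schwarz bound $\bigl|Hess(u)(\nabla u,\cdot)\bigr|^2\leq|Hess(u)|^2|\nabla u|^2$ applied to $\tfrac12\nabla|\nabla u|^2=\tfrac12\nabla\psi-\lambda u\nabla u$. Your difference-of-squares packaging of $\nabla\psi\cdot\nabla(\psi-2\lambda u^2)$ is just a cosmetic reorganization of the paper's expansion of $\bigl|\tfrac12\nabla\psi-\lambda u\nabla u\bigr|^2$ in \eqref{guanxi}, so there is nothing substantive to add.
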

\begin{proof}[Proof of Lemma \ref{Lem2}]
The proof of this result follows from that of Lemma 1 in
\cite{[HangWang]} with little modification, but is included
for completeness. Following the computation method of \cite{[Li-Yau]},
let $e_1, e_2,..., e_n$ be a local orthonormal frame field on $M^n$.
We adopt the notation that subscripts in $i$, $j$, and $k$, with $1\leq i, j, k\leq n$,
mean covariant differentiations in the $e_i$, $e_j$ and $e_k$, directions respectively.

Differentiating $\psi$ in the direction of $e_i$, we have
\[
\psi_i=2\sum_ju_ju_{ij}+2\lambda uu_i,
\]
and so
\[
\left|\frac 12\nabla\psi-\lambda u\nabla u\right|^2
=\sum_i\left(\sum_ju_ju_{ij}\right)^2
\leq|\nabla^2u|^2\cdot|\nabla u|^2,
\]
where the summation convention is adopted on repeated indices.
This implies
\[
\frac 14|\nabla\psi|^2-\lambda u\nabla u\cdot\nabla\psi
\leq|\nabla u|^2(|\nabla^2u|^2-\lambda^2u^2).
\]
Therefore on $\Omega=\{\nabla u\neq 0\}$, we have
\begin{equation}\label{guanxi}
|\nabla^2u|^2-\lambda^2u^2\geq\frac{|\nabla\psi|^2
-4\lambda u\nabla u\cdot\nabla\psi}{4|\nabla u|^2}
=\frac{\nabla(\psi-2\lambda u^2)\cdot\nabla\psi}{4|\nabla u|^2}.
\end{equation}
On the other hand, using the Bochner formula \eqref{Boch},
we conclude that
\begin{equation*}
\begin{aligned}
\frac 12\Delta_f\psi&=|\nabla^2u|^2+\nabla u\cdot\nabla\Delta_fu
+Ric_f(\nabla u,\nabla u)+\lambda|\nabla u|^2+\lambda u\Delta_fu\\
&=|\nabla^2u|^2-\lambda^2u^2+Ric_f(\nabla u,\nabla u),
\end{aligned}
\end{equation*}
where we used $\Delta_fu=-\lambda u$. Combining this
with \eqref{guanxi} yields \eqref{inequ}.
\end{proof}

\section{Proof of Theorem \ref{main}} \label{theor3}
In this section we will prove Theorem \ref{main}. Since the idea of
proof comes from Hang-Wang \cite{[HangWang]}, we only provide main
steps and omit tedious discussions.

\begin{proof}[Proof of Theorem \ref{main}]
Assume that $\lambda_1=\pi^2/d^2$. From the proof of Proposition
\ref{L301} in Section \ref{theor1}, we easily get
$a=0$, and hence
\[
\min_M u=-1 \quad \mathrm{and} \quad \max_M u=1,
\]
where $u=\phi$ is a first eigenfunction. By scaling the metric,
we can assume $d=\pi$. So $\lambda_1=1$. Let $\psi=|\nabla u|^2+u^2$.
By Lemma \ref{Lem2}, on $\Omega=\{\nabla u\neq 0\}$, we have
\begin{equation}\label{inequR0}
\Delta_f\psi-\frac{\nabla \psi\cdot\nabla(\psi-2\lambda u^2)}{2|\nabla u|^2}
\geq 0.
\end{equation}
Using the maximum principle, we conclude that
\[
\psi:=|\nabla u|^2+u^2\leq \max_{\{\nabla u=0\}}(|\nabla u|^2+u^2)=1,
\]
since $\psi$ can not attain the maximum value at the point of
$\Omega=\{\nabla u\neq 0\}$.

Take two points $p_0$ and $p_1$
such that
\[
u(p_0)=-1 \quad \mathrm{and} \quad u(p_1)=1,
\]
and let $\gamma:[0,l]\to M$
be a unit speed minimizing geodesic from $p_0$ to $p_1$.
We define a function $y(t)=u(\gamma(t))$. Then
\[
|y'(t)|=|\nabla u(\gamma(t))\cdot\gamma'(t)|
\leq |\nabla u(\gamma(t))|\leq\sqrt{1-y^2(t)}.
\]
Hence
\[
\pi\geq l\geq\int_{\{0\leq t\leq l,y'(t)>0\}}dt
\geq\int^l_0\frac{y'(t)dt}{\sqrt{1-y^2(t)}}
=\int^1_{-1}\frac{dx}{\sqrt{1-x^2}}=\pi.
\]
Therefore
\[
l=\pi\quad \mathrm{and}\quad y'(t)>0
\]
for almost every $t\in(0,\pi)$. Hence $y(t)$ is
strictly increasing on $[0,\pi]$.
Moreover, we also have
\[
\int^{\pi}_0\frac{y'(t)dt}{\sqrt{1-y^2(t)}}=\pi,
\]
which implies that $y'(t)=\sqrt{1-y^2(t)}$ for all
$t\in [0,\pi]$. Since $y(0)=-1$ and $y'(0)=0$, then
\[
y(t)=u(\gamma(t))=-\cos t
\]
for $t\in [0,\pi]$. It follows that
\[
(\nabla^2u)(\gamma'(0),\gamma'(0))=1.
\]
Since $\Delta_fu(p_0)=-\lambda_1u(\gamma(0))=1$,
$(\nabla f\cdot\nabla u)(p_0)=0$
and $(\nabla^2u)_{p_0}\geq 0$, we conclude that
$\Delta u(p_0)=1$ and hence we must have
\[
(\nabla^2u)_{p_0}=\lambda_{\gamma'(0)}\otimes\lambda_{\gamma'(0)},
\]
where for any tangent vector $X$, $\lambda_X$ is the dual cotangent
vector given by $\lambda_X(Y)=\langle X,Y\rangle$ for any tangent
vector $Y$.

\vspace{0.5em}

Next, similar to the Hang-Wang's argument \cite{[HangWang]},
we get
\begin{proposition}
The set $\{u=\pm 1\}$ has at most four points.
\end{proposition}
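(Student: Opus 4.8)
The plan is to combine the rank-one structure of the Hessian at the extremal points with a distance computation, and then run a simple bipartite counting argument. Write $N_-=\{u=-1\}$ and $N_+=\{u=1\}$, and set $m=\#N_-$, $n=\#N_+$; I would show $m\le 2$ and $n\le 2$, so that $\{u=\pm1\}$ has at most four points.

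First I would establish that \emph{every} pair $(p,q)$ with $p\in N_-$ and $q\in N_+$ satisfies $d(p,q)=\pi$. The upper bound $d(p,q)\le d=\pi$ is just the diameter bound. For the lower bound, along any path $\sigma$ from $p$ to $q$ the inequality $\psi\le 1$ gives $|\nabla u|\le\sqrt{1-u^2}$, hence $|\sigma'|\ge|(u\circ\sigma)'|/\sqrt{1-(u\circ\sigma)^2}$; integrating and using that $u\circ\sigma$ runs from $-1$ to $1$ shows the length of $\sigma$ is at least $\int_{-1}^1 dx/\sqrt{1-x^2}=\pi$. Thus $d(p,q)=\pi$, and every minimizing geodesic $\gamma$ from $p$ to $q$ has length exactly $\pi$. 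Re-running on such a $\gamma$ the computation already carried out above for the geodesic from $p_0$ to $p_1$, I obtain $u(\gamma(t))=-\cos t$, the identity $(\nabla^2u)(\gamma'(0),\gamma'(0))=1$, and $(\nabla^2u)_p=\lambda_{\gamma'(0)}\otimes\lambda_{\gamma'(0)}$.

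Next I would exploit that the Hessian $(\nabla^2u)_p$ is an intrinsic quantity at $p$, independent of the chosen geodesic. Since it equals the rank-one form $\lambda_{\gamma'(0)}\otimes\lambda_{\gamma'(0)}$, there is a unit vector $e_p$, unique up to sign, with $(\nabla^2u)_p=\lambda_{e_p}\otimes\lambda_{e_p}$; and the identity $\langle e_p,\gamma'(0)\rangle^2=(\nabla^2u)(\gamma'(0),\gamma'(0))=1$ forces $\gamma'(0)=\pm e_p$ for \emph{every} minimizing geodesic $\gamma$ from $p$ to any point of $N_+$. Because a geodesic is determined by its initial point and velocity, there are at most two such geodesics (the ones with $\gamma'(0)=e_p$ and $\gamma'(0)=-e_p$), reaching at most two distinct endpoints at $t=\pi$. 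As $p$ is joined to every point of $N_+$ by such a geodesic, this gives $n\le 2$. Applying the identical reasoning to $-u$, which interchanges $N_+$ and $N_-$, gives $m\le 2$, and therefore $\#\{u=\pm1\}=m+n\le 4$.

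The routine part is re-verifying the profile $u(\gamma(t))=-\cos t$ and the rank-one identity for an arbitrary minimizing min-to-max geodesic, since this repeats the computation already done. The only genuinely new ingredient, and the step I would watch most carefully, is the degree bound: that the \emph{same} vector $e_p$ (up to sign) controls all minimizing geodesics issuing from $p$ toward $N_+$, which is exactly what converts the complete min-to-max connectivity established in the distance step into the bound $n\le 2$.
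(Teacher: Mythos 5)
Your proof is correct and takes essentially the same route as the paper: connect a minimum point to every maximum point by a minimizing geodesic, show it has length exactly $\pi$ with the profile $-\cos t$, and use the rank-one identity $(\nabla^2u)_p=\lambda_{e_p}\otimes\lambda_{e_p}$ to force all initial velocities to be $\pm e_p$, so that $\exp_p(\pm\pi e_p)$ leaves at most two candidates on each of $\{u=1\}$ and $\{u=-1\}$. The only differences are cosmetic: you spell out the distance identity $d(p,q)=\pi$ and the intrinsic uniqueness of $e_p$ up to sign, which the paper compresses into ``the same argument as before,'' and you work from an arbitrary $p\in\{u=-1\}$ rather than the fixed base point $p_0$.
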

\begin{proof}
We only discuss the case $\{u=1\}$ since the case $\{u=-1\}$ is
similar. For any point $p$ with $u(p)=1$, we choose a minimizing
geodesic $\gamma_p:[0,l_p]\to M$ from $p_0$ to $p$. Then the
same argument as before shows that
\[
l_p=\pi \quad \mathrm{and} \quad
(\nabla^2u)_{p_0}=\lambda_{\gamma_p'(0)}\otimes\lambda_{\gamma_p'(0)},
\]
which implies $\gamma_p'(0)=\pm\gamma'(0)$. Hence $p=\exp(\pi\gamma_p'(0))$
has at most two choices.
\end{proof}

In the next step, to finish the proof of Theorem \ref{main}, we only
need to claim that the dimension of $M$ must be one. Argue by contradiction.
If the claim is not true, then we assume that
$\mathrm{dim} M\geq2$. If we let
\[
M^*=M\setminus\{u=\pm1\},
\]
then $M^*$ is still connected. In the following we want to show
$|\nabla u|^2+u^2=1$ on $M^*$. In fact, we consider
\[
E=\{p\in M^*:|\nabla u(p)|^2+u^2(p)=1\}.
\]
Clearly, $E$ is closed. On the other hand, if $p\in E\subset \Omega$,
by \eqref{inequR0} and the strong maximum principle, we have
\[
|\nabla u|^2+u^2\equiv1
\]
near $p$. Hence $E$ must be either an empty set or $M^*$.
Since for any $t\in(0,\pi)$,
\[
|\nabla u(\gamma(t))|^2+u^2(\gamma(t))\geq \cos^2t+\sin^2t
=1,
\]
we see $E$ is nonempty and therefore $E=M^*$. Now we define
$X=\frac{\nabla u}{|\nabla u|}$ on $M^*$. Since
$|\nabla u|^2+u^2\equiv1$, differentiating it yields
\[
\nabla^2u(X,X)=-u.
\]
We also notice that the proof of Lemma \ref{Lem2}
easily implies that
\[
|\nabla^2u|^2=u^2
\]
on $M^*$, since $\psi=|\nabla u|^2+u^2\equiv1$. Combining the
above two equalities, we have
\[
\nabla^2u=-u\lambda_X\otimes\lambda_X.
\]
Direct calculation shows that $\nabla_XX=0$, and hence all
integral curves of $X$ are geodesics. Let $\Sigma=\{u=0\}$.
Since $|\nabla u|=1$ on $\Sigma$, we see that $\Sigma$ is a
hypersurface, which may have more than one components.
For any $p\in\Sigma$, let $\alpha_p$ be the maximal integral
curve of $-X$ with $\alpha_p(0)=p$. Then $\alpha_p$ is a unit
speed geodesic. Letting $y_p(t)=u(\alpha_p(t))$, we know that
\[
y_p(0)=0\quad \mathrm{and} \quad y_p'(t)=-\sqrt{1-y^2_p(t)}.
\]
It gives that
\[
y_p(t)=-\sin t \quad \mathrm{for} \quad t\in [0,\pi/2).
\]
On the other hand, $\alpha_p$ is a geodesic on $M$,
defined on $[0,\infty)$. We have
\[
u(\alpha_p(t))=-\sin t
\]
for $t\in [0,\pi/2]$. In particular, $u(\alpha_p(\pi/2))=-1$.
The same argument as before shows
\[
(\nabla^2u)_{\alpha_p(\frac{\pi}{2})}=\lambda_{\alpha'_p(\frac{\pi}{2})}
\otimes\lambda_{\alpha'_p(\frac{\pi}{2})}.
\]
Here $p=\exp_{\alpha_p(\frac{\pi}{2})}
\left(-\frac{\pi}{2}\alpha'_p(\frac{\pi}{2})\right)$.
Since there are at most two points in the set $\{u=-1\}$, we may
find point $q$ satisfying $u(q)=-1$ and infinitely many $p\in\Sigma$
such that $\alpha_p(\frac{\pi}{2})=q$. This clearly leads to a
contradiction since  $\alpha'_p(\frac{\pi}{2})$ has at most two
choices. Therefore the dimension of $M$ must be one. At this time,
we easily see that $Ric(M)=0$ and $Hess(f)\geq 0$ on $S^1$. Hence
$f''(t)=0$ on $S^1$, and $f$ is constant.
\end{proof}

\bibliographystyle{amsplain}

\end{document}